\documentclass[a4paper,11pt]{article}
\usepackage[utf8]{inputenc}

\usepackage{amsthm,amsmath,amsfonts,amssymb}
\usepackage[english]{babel}
\usepackage{graphicx}
\usepackage{verbatim,xcolor}

\newcommand{\ka}{\mathrm{Ka}}

\DeclareMathOperator{\dist}{dist}







\def\vec0{\mbox{\boldmath $0$}}





\theoremstyle{plain}   

\newtheorem{theorem}{Theorem}[section]
\newtheorem{proposition}[theorem]{Proposition}
\newtheorem{corollary}[theorem]{Corollary}
\newtheorem{lemma}[theorem]{Lemma}
\newtheorem{definition}[theorem]{Definition}

\newtheorem{question}{Question}

\newtheorem{problem}[theorem]{Problem}

\title{On mixed radial Moore graphs of diameter $3$}
\author{J.M. Ceresuela$^{a,b}$, N. L\'opez$^a$, D. Chemisana$^b$. 
\\ \\
{\small $^a$Dep. de Matem\`atica, Universitat de Lleida, Lleida (Spain)} \\
{\small $^b$Applied Physics Section of the Environm. Sci. Dept. Universitat de Lleida, Lleida (Spain)} \\
{\small {\tt \{jesusmiguel.ceresuela,nacho.lopez,daniel.chemisana\}@udl.cat}}
}
\date{}

\begin{document}

\maketitle

\begin{abstract}
Radial Moore graphs and digraphs are extremal graphs related to the Moore ones where the distance-preserving spanning tree is preserved for some vertices. This leads to classify them according to their proximity to being a Moore graph or digraph. In this paper we deal with mixed radial Moore graphs, where the mixed setting allows edges and arcs as different elements. An exhaustive computer search shows the top ranked graphs for an specific set of parameters. Moreover, we study the problem of their existence by providing two infinite families for different values of the degrees and diameter $3$. One of these families turns out to be optimal.   
\end{abstract}

\noindent{\em Mathematics Subject Classifications:} 05C35. \\
\noindent{\em Keywords:} Mixed graph, degree/diameter problem, Moore bound, diameter.

\section{Introduction}


Given the values of the maximum out-degree $z$ and the diameter $k$, there is a natural upper bound $n_{z,k}$ for the largest order of a digraph with these two parameters,
\[
n(z,k)=1+z+\cdots + z^k,
\]
referred to as the {\em Moore bound\/} for digraphs. Digraphs attaining such a bound are called {\em Moore digraphs\/}. In particular, all vertices of a Moore digraph have the same degree ($d$) and the same eccentricity ($k$). It is well known that Moore digraphs do only exist in the trivial cases, $d=1$ or $k=1$, which correspond to the directed cycle of order $k+1$ and the complete digraph of order $d+1$, respectively (see \cite{plesnik74,BT80}). This has led to the study of digraphs `close' to the Moore ones. One way to do it is by allowing the existence of vertices with eccentricity just one more than the value they should have. In this context, regular digraphs of degree $z$, radius $k$, diameter at most $k+1$ and order equal to $n(z,k)$ are known as {\em radial Moore digraphs\/}. These extremal digraphs were first studied by Knor \cite{485578} and they exist for any value of $z$ and $k$ (see also Gimbert and L\'opez \cite{GL2008}). \\

Lately, the definition of radial Moore digraph was extended to undireted graphs. In this undirected case, given the values of the maximum degree $r$ and the diameter $k$ of a graph, there Moore bound is given by $N(r,k)$ where
\begin{equation}\label{eq:moore_bound}
N(r,k)=1+r+r(r-1)+\cdots+r(r-1)^{k-1},
\end{equation}
Graphs attaining such a bound are referred to as {\em Moore graphs\/}. From its definition, all vertices of a Moore graph have the same degree ($d$) and the same eccentricity ($k$). So, the definition of {\em radial Moore graphs\/} as regular graphs of degree $r$, radius $k$, diameter $k+1$ and order equal to $N(r,k)$ is just an undirected version of the same definition for digraphs. \\

Curiously enough, the undirected version of this problem has proved to be much more difficult and it is not obvious whether or not there exists a radial Moore graph for all possible values of degree and diameter. First Capdevila et al. \cite{CCEGL2010} proved the existence of radial Moore graphs for radius $2$ and any degree. They also considered some ranking measures of how well a radial Moore graph approximates a Moore graph. Later on, Exoo et al. \cite{EXOO20121507} gave a construction of radial Moore graphs of diameter $3$ for all degrees $d \geq 22$ and some other sporadic values of $k$ and $d$. In 2015, Gomez and Miller \cite{GOMEZ201515} presented a construction technique that produces radial Moore graphs for every value of diameter and degree large enough (depending on the diameter). Nevertheless, there exist infinitely many values of $r$ and $k$ for which the problem of existence of radial Moore graphs remains open.

\subsubsection*{Terminology and notation}

A {\em mixed} (or {\em partially directed\/}) graph $G$ with vertex set $V$ may contain a set $E$ of (undirected) {\em edges} as well a set $A$ of directed edges (also known as {\em arcs}). From this point of view, a {\em graph} [resp. {\em directed graph} or {\em digraph}] has all its edges undirected [resp. directed]. The set of vertices which
are adjacent from [to] a given vertex $v$ is denoted by $\Gamma^{+}(v)$ [$\Gamma^{-}(v)$]. The {\em undirected degree} of a vertex $v$, denoted by $d(v)$ is the number of edges incident to $v$. The {\em out-degree} [resp. {\em in-degree}] of vertex $v$, denoted by $d^+(v)$ [resp. $d^-(v)$], is the number of arcs emanating from [resp. to] $v$.  If $d^+(v)=d^-(v)=z$ and $d(v)=r$, for all $v \in V$, then $G$ is said to be {\em totally regular\/} of degrees $(r,z)$ (or simply {\em $(r,z)$-regular}).
A {\em walk\/} of length $\ell\geq 0$ from $u$ to $v$ is a sequence of $\ell+1$ vertices, $u_0u_1\dots u_{\ell-1}u_\ell$, such that $u=u_0$, $v=u_\ell$ and each pair $u_{i-1}u_i$, for $i=1,\ldots,\ell$, is either an edge or an arc of $G$. A {\em directed walk} is a walk containing only arcs. An {\em undirected walk} is a walk containing only edges. A walk whose vertices are all different is called a {\em path}.
The length of a shortest path from $u$ to $v$ is the {\it distance\/} from $u$ to $v$, and it is denoted by $\dist(u,v)$. Note that $\dist(u,v)$ may be different from $\dist(v,u)$, when shortest paths between $u$ and $v$ involve arcs. The sum of all distances from a vertex $v$, $s(v)=\sum_{u\in V} d(v,u)$, is referred to as the {\em status\/} of $v$ (see \cite{BuckHara}). We define the {\em status vector\/} of $G$, $\mathbf{s}(G)$, as the vector constituted by the status of all its vertices. Usually, when the vector is long enough, we denote it with a short description using superscripts, that is, $\mathbf{s}(G):s_1^{n_1},s_2^{n_2},\dots,s_k^{n_k}$, where $s_1>s_2> \dots >s_k$, and $n_i$ denotes the number of vertices having $s_i$ as its local status, for all $1 \leq i \leq k$. 
The {\em out-eccentricity\/} of a vertex $u$ is the maximum distance from $u$ to any vertex in $G$. A {\em central vertex} is a vertex having minimum out-eccentricity. The maximum distance between any pair of vertices is the {\it diameter} of $G$.

Mixed graphs can be seen as a generalization of both, undirected and directed graphs, see for instance the works by Nguyen and Miller \cite{nm08} and Buset et al. \cite{BUSET20162066}.

\subsubsection*{Organization of the paper}

This paper is organized as follows: In section \ref{sec:mrm} the reader will find a generalization of radial Moore graphs to the mixed case. A closeness measure is also presented there. We find all mixed radial Moore graphs for the case of $(r,z,k)=(2,1,2)$, and we present the closest ones to being a mixed Moore graph in section \ref{sec:ranking}. Next, in section \ref{sec:exist}, we study the problem of the existence of mixed radial Moore graphs of diameter $3$. To this end, we provide two infinite families $H_r$ (for $r\geq 1$ and $z=1$) and $G_z$ (for $r=1$ and $z \geq 1$). We use an algebraic method to design $H_r$. Besides, $G_z$ is constructed performing a convinient swap to a pair of arcs of the family of Kautz digraphs. Next we prove the optimality of this family. Some open problems and concluding remarks are presented in final section \ref{sec:remark}.


\section{Mixed radial Moore graphs}\label{sec:mrm}
\label{sec:measure}

The degree/diameter problem for mixed graphs asks for the largest possible number of vertices $n(r,z,k)$ in a mixed graph with maximum undirected degree $r$, maximum directed out-degree $z$, and diameter $k$. A natural upper bound for $n(r,z,k)$ is derived by counting the number of vertices at every distance from any given vertex $v$ in a mixed graph with given maximum undirected degree $r$, maximum directed out-degree $z$, and diameter $k$. This bound is known as the {\em Moore bound} for mixed graphs (see \cite{BUSET20162066} and \cite{DALFO20182872}):
\begin{equation}
\label{eq:moorebound4}
M(r,z,k)=A\frac{\lambda_1^{k+1}-1}{\lambda_1-1}+B\frac{\lambda_2^{k+1}-1}{\lambda_2-1},
\end{equation}
where
%
\begin{align}
\tau   &=(z+r)^2+2(z-r)+1, \label{v} \\
\lambda_1 &=\frac{1}{2}(z+r-1-\sqrt{\tau}),\qquad \lambda_2=\frac{1}{2}(z+r-1+\sqrt{\tau}),\label{u's} \\
A &=\frac{\sqrt{\tau}-(z+r+1)}{2\sqrt{v}},
\qquad
B =\displaystyle{\frac{\sqrt{\tau}+(z+r+1)}{2\sqrt{\tau}}}. \label{A's}
\end{align}

It is a matter of routine to check that $M(0,z,k)=n(z,k)$ and $M(r,0,k)=N(r,k)$. So the mixed Moore bound is a generalization of both undirected Moore bound and directed Moore bound, as expected. {\em Mixed Moore graphs} are those with order attaining \eqref{eq:moorebound4}, which means that between any pair of vertices there is a unique shortest path of length not greater than the diameter. Mixed Moore graphs must be totally regular of degree $(r,z)$ (see Bos\'{a}k~\cite{B79}) and the distance-preserving spanning tree, also known as Moore tree, hanging at any vertex $v$ does not depend on the chosen vertex. (see Fig. \ref{fig:arbolgen}).

\begin{figure}[ht]
        \centerline{\includegraphics*[width=\textwidth]{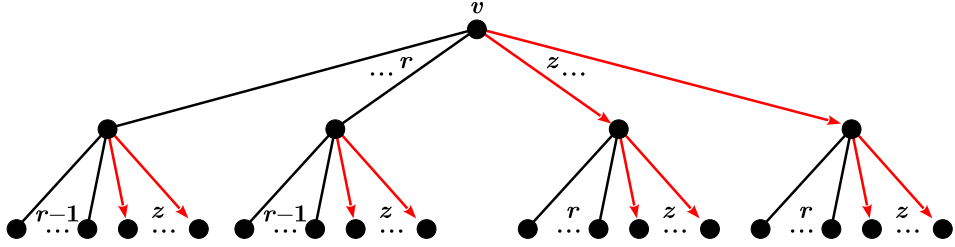}}
        \caption{Moore tree of a radial Moore graph of diameter $2$.}
        \label{fig:arbolgen}
\end{figure}
Next, we define a mixed radial Moore graph as a natural generalization of both radial Moore graph and radial Moore digraph.
\begin{definition}
A totally $(r,z)$-regular graph of radius $k$, diameter $k+1$ and order $M(r,z,k)$ is a $(r,z,k)$-mixed radial Moore graph.
\end{definition}
Notice that a $(0,z,k)$-mixed radial Moore graph is simply a radial Moore digraph and a $(r,0,k)$- mixed radial Moore graph is a radial Moore graph. The vertex set of any $(r,z,k)$-mixed radial Moore can be partitioned in two sets where one set contains all central vertices (those vertices with out-eccentricity $k$, that is, their corresponding distance preserving spannig tree is a Moore tree) and the other set has all non-central vertices (with out-eccentricity $k+1$). \\

Mixed radial Moore graphs can be seen as an approximation of mixed Moore graphs where the diameter constraint has been relaxed a bit. This relaxation may produce a significative number of graphs (as we will see later) that we want to rank in order to see which one is the 'closest' graph to being a Moore graph. The number of central vertices could be a first attempt to measure this closeness, but, as we will see later, this measure is not finer since there are many mixed radial Moore graphs with the same number of central vertices. \\ 

Two different ranking measures were introduced in \cite{CCEGL2010} for the undirected case. One ranking measure is related to the girth, since Moore (undirected) graphs attain the minimum order allowed by the girth. But this measure is no longer useful for the mixed case where short directed cycles appear in mixed Moore graphs. The other ranking measure is based on the status of a vertex in a  Moore graph. First we give a characterization of a mixed Moore graph in terms of the statuses of their vertices.

\begin{proposition}
Given three positive integers $r>1$, $z>1$ and $k>1$, let $G$ be a $(r,z,k)$-mixed radial Moore graph. Then, for every vertex $v$ of $G$ we have
\[
s(v) \geq A\left(\frac{k\lambda_1^{k+2}-(k+1)\lambda_1^{k+1}+\lambda_1}{(\lambda_1-1)^2}\right)+B\left(\frac{k\lambda_2^{k+2}-(k+1)\lambda_2^{k+1}+\lambda_2}{(\lambda_2-1)^2}\right)
\] 
where $\lambda_1,\lambda_2,A,B$ are defined in equations \eqref{u's} and \eqref{A's}. Moreover, this bound is attained for every vertex if and only if $G$ is a mixed Moore graph.   
\end{proposition}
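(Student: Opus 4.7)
The plan is to identify the stated lower bound as the status of a central vertex and then to show that non-central vertices have strictly larger status; the equality clause will follow easily. For any vertex $v$ of $G$ write $N_i(v)$ for the number of vertices at distance exactly $i$ from $v$. From $\tau=(r+z-1)^2+4z$ one checks that $\lambda_1,\lambda_2$ are the roots of $\lambda^2-(r+z-1)\lambda-z=0$ and that the constants in \eqref{A's} satisfy $A+B=1$ and $A\lambda_1+B\lambda_2=r+z$. Consequently, the sequence $N_i:=A\lambda_1^i+B\lambda_2^i$ obeys the Moore-tree level-count recurrence with $N_0=1$, $N_1=r+z$, and $\sum_{i=0}^k N_i=M(r,z,k)$.

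The key preliminary, derived by the usual truncated Moore-counting argument (see the derivations of \eqref{eq:moorebound4} in \cite{BUSET20162066,DALFO20182872}), is the partial-sum inequality $\sum_{j=0}^i N_j(v)\le\sum_{j=0}^i N_j$ valid at every vertex $v$ and every $i\ge 0$, together with the fact that equality at $i=k$ forces equality at every smaller $i$. Now let $v_0$ be a central vertex, so $\ecc(v_0)=k$; since $|V(G)|=M(r,z,k)$ the inequality is tight at $i=k$, hence $N_i(v_0)=A\lambda_1^i+B\lambda_2^i$ for $0\le i\le k$. Thus
\[
s(v_0)=\sum_{i=0}^k i\,N_i(v_0)=A\sum_{i=0}^k i\,\lambda_1^i+B\sum_{i=0}^k i\,\lambda_2^i,
\]
and substituting the elementary identity $\sum_{i=0}^k i\,x^i=(k\,x^{k+2}-(k+1)x^{k+1}+x)/(x-1)^2$ (valid for $x\ne 1$) with $x=\lambda_1,\lambda_2$ recovers exactly the right-hand side of the statement.

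For a non-central vertex $v$ (so $\ecc(v)=k+1$) define $\delta_i:=N_i-N_i(v)\ge 0$ for $0\le i\le k$; the nonnegativity comes from the partial-sum bound, and since $|V(G)|=M(r,z,k)$ the deficits satisfy $\sum_{i=0}^k\delta_i=N_{k+1}(v)\ge 1$. A direct rearrangement then gives
\[
s(v)-s(v_0)=-\sum_{i=0}^k i\,\delta_i+(k+1)\sum_{i=0}^k\delta_i=\sum_{i=0}^k(k+1-i)\,\delta_i>0,
\]
proving the strict inequality at non-central vertices and hence the claimed lower bound for every $v$. For the equality clause, equality at every vertex forces $N_{k+1}(v)=0$ for all $v$, i.e.\ $G$ has diameter $k$; together with $|V(G)|=M(r,z,k)$ this makes $G$ a mixed Moore graph, and conversely every vertex of a mixed Moore graph has eccentricity $k$ with Moore-tree level counts, so every status equals $s(v_0)$. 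The point I expect to require most care is the propagation of equality in the partial-sum bound from $i=k$ down to smaller $i$, since it needs to distinguish vertices at each level reached by an edge from those reached by an arc and to verify that no slack at an intermediate level can be compensated later without violating the Moore count.
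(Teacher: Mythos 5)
Your proof is correct and takes essentially the same route as the paper: it computes the Moore-tree level counts $N_i=A\lambda_1^i+B\lambda_2^i$ at a central vertex and evaluates $s(v_0)=\sum_i iN_i$ via the identity for $\sum_i i x^i$. Your explicit deficit rearrangement $s(v)-s(v_0)=\sum_{i=0}^k(k+1-i)\delta_i$ for non-central vertices, and your handling of the equality clause, carefully fill in steps that the paper dispatches with the single sentence ``a non-central vertex has vertices at distance $k+1$, so $s(u)\geq s(v)$.''
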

\begin{proof}
The distance-preserving spanning tree of any central vertex $v$ in $G$ is completely determined (see Figure \ref{fig:arbolgen} for the case $k=2$). Let $N_i$ be the number of vertices at distance $i$ from $v$, with $N_i=R_i+Z_i$, where $R_i$ is the number of vertices that, in the corresponding tree rooted at $v$,  have an edge with their parents; and  $Z_i$ is the number of vertices that have an arc from their parents. Then,
\begin{equation}
\label{Ni}
N_i = R_i+Z_i = R_{i-1}((r-1)+z)+Z_{i-1}(r+z).
\end{equation}
Besides,
$Z_i=z(N_{i-1}-Z_{i-1})+zZ_{i-1}=zN_{i-1}$ and, hence,
\begin{align}
\label{recurNi}
N_i & =(r+z)N_{i-1}-R_{i-1}=(r+z)N_{i-1}-(N_{i-1}-Z_{i-1}) \nonumber\\
    & =(r+z-1)N_{i-1}+zN_{i-2},\qquad i=2,3,\ldots
\end{align}
with initial values $N_0=1$ and $N_1=r+z$. Hence, the status of vertex $v$ is given by
\[
s(v)=\sum_{i=0}^{k} iN_i
\]
Taking into account that $N_i$ is defined by the linear recurrence relation \eqref{recurNi}, we have that $N_i=A\lambda_1^i+B\lambda_2^i$. Hence,
\[
\sum_{i=0}^{k} iN_i=A\sum_{i=0}^{k} i\lambda_1^i+B\sum_{i=0}^{k} i\lambda_2^i
\]
Denote $S=\sum_{i=0}^{k} i\lambda_1^i$. Notice that 
\[
\lambda_1S-S=k\lambda_1^{k+1}-\sum_{i=1}^k \lambda_1^i =k\lambda_1^{k+1}-\frac{\lambda_1^{k+1}-1}{\lambda_1-1}+1
\]
As a consequence,
\[
S= \frac{k\lambda_1^{k+2}-(k+1)\lambda_1^{k+1}+\lambda_1}{(\lambda_1-1)^2} 
\]

The calculation for $\sum_{i=0}^{k} i\lambda_2^i$ is equivalent, obtaining,
\begin{equation}
s(v) = A\left(\frac{k\lambda_1^{k+2}-(k+1)\lambda_1^{k+1}+\lambda_1}{(\lambda_1-1)^2}\right)+B\left(\frac{k\lambda_2^{k+2}-(k+1)\lambda_2^{k+1}+\lambda_2}{(\lambda_2-1)^2}\right) 
\label{eq:statusMoore}
\end{equation}

A non-central vertex $u$ of $G$ has vertices at distance $k+1$, so $s(u) \geq s(v)$, completing the proof.
\end{proof}

From now on, let $\mathbf{s}_{r,z,k}$ be the vector of dimension $n=M(r,z,k)$ whose components are all equal to $s(v)$, where $s(v)$ denotes de status of any vertex in a mixed Moore graph (see Eq. \eqref{eq:statusMoore}). Notice that $\mathbf{s}_{r,z,k}$ represent the status vector of a mixed Moore graph. \\

Let us denote by ${\cal RM}(r,z,k)$ the set of all nonisomorphic $(r,z,k)$-mixed radial Moore graphs. Our purpose is to rank each mixed graph of this `population' in terms of how close is of being a mixed Moore graph, generalizing one ranking measure given in \cite{CCEGL2010} for the undirected case.

Let $G\in {\cal RM}(r,z,k)$. For every positive integer $p$ we define 
\[
N_p(G)=\|\mathbf{s}(G)-\mathbf{s}_{r,z,k}\|_p.
\]
In particular, $N_1(G)$ measures the difference between the total status of $G$ and the one corresponding to a mixed Moore graph. 

Given two mixed graphs $G_1,G_2\in {\cal RM}(r,z,k)$, we define $G_1$ and $G_2$ to be {\em status-equivalent\/}, $G_1 \sim G_2$, if they have the same status vector. In the quotient set of ${\cal RM}(r,z,k)$ by $\sim$, ${\cal RM}(r,z,k)/\sim$, we will say that $G_1$ is closer than $G_2$ to be a mixed Moore graph if there exists a positive integer $l$ such that
\[
N_p(G_1)=N_p(G_2),\ p=1,\dots,l-1\quad \mathrm{and}\quad  
N_{l}(G_1)<N_{l}(G_2),
\]
in which case it is denoted by $G_1 < G_2$. Notice that this relation induces a total order in ${\cal RM}(r,z,k)/\sim$, since
\[
\mathbf{s}(G_1)=\mathbf{s}(G_2) \iff N_p(G_1)=N_p(G_2), \  \mbox{for every}\ p=1,\dots,n.
\]

\section{Ranking of $(2,1,2)$-mixed radial Moore graphs.}\label{sec:ranking}

To find all mixed radial Moore graphs for $r=2$, $z=1$ and $k=2$, we begin by constructing the mixed Moore tree anchored at a distinguished vertex $0$. It has $11$ vertices. It remains to consider the arcs and the edges joining vertices at distance $2$ from $0$ (see figure \ref{fig:MT}). The directed part of the Moore tree can be completed by a bijection between the set of vertices with zero outdegree $\{4,5,6,7,8,9,10\}$ and the set of vertices with zero indegree $\{0,1,3,4,6,8,10\}$. Of course, there are some bijections that are not valid in order to complete the directed part (for instance, those containing fixed points or the assignation $5 \rightarrow 1$, among others). However, the number of bijections to be taken into account at this step is $< 7!$. Once a valid bijection $b$ is found, we proceed with the exhaustive search to complete the undirected part. To this end we consider the multiset of vertices $\{4,5,5,6,7,7,8,9,9,10\}$ with undirected degree less than $2$, taking into account that those vertices with zero undirected degree appears twice in the set. By considering all possible non ordered pairs of elements of this multiset we get the remainig edges of the mixed graph. Again some of these combinations will be not valid (for instance the ones containing the unordered pair $\{5,5\}$). A combination turns out to be fair depending also on the bijection $b$, since if some unordered pair matches with an element of  $b$ then such combination is dismissed. The total number of pairs of sets of directed arcs and undirected arcs to check is at most $7!\binom{10}{2,2,2,2,2}=\frac{10!7!}{2!^5}$, but due to the restrictions, only about $10^6$ are valid. Then, these sets are added to the Moore tree and the diameter of the resulting graph is computed. There are about $3\cdot 10^5$ mixed graphs with diameter $3$, but only 9486 are not isomorphic.

\begin{figure}[ht]
        \centerline{\includegraphics*[scale=0.5]{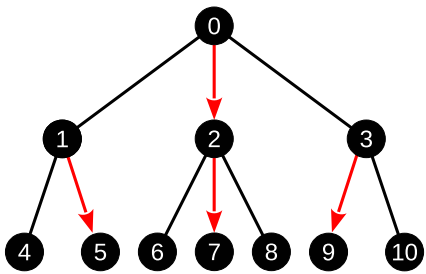}}
        \caption{Moore tree used as a basis to construct every $(2,1,2)$-mixed radial Moore graph.}
        \label{fig:MT}
\end{figure}

It turns out that ${\cal RM}(2,1,2)$ contains exactly $9486$ mixed graphs. According to their number of central vertices $c$, they are distributed as indicated in Table \ref{table:center_girth}. 

\begin{table}[htb]
\begin{center}
\begin{tabular}{|l|l|l|l|l|}\hline
$c=1$ & $c=2$ & $c=3$ & $c=4$ & Total \\ \hline
8529 (89.91\%) & 906 (9.55\%) & 13 (0.14\%) & 38 (0.40\%)&  9486 (100\%)
\\
\hline
\end{tabular}
\caption{Distribution of mixed radial Moore graphs according to their number of central vertices.} \label{table:center_girth}
\end{center}
\end{table}

Notice that the number of central vertices only takes four distinct values being $4$ their maximum.  It would be interesting to find an upper bound for the cardinality of the center of a mixed radial Moore graph as a fraction of its order. On the other hand, we remark that around $90\%$ of the mixed graphs have just one central vertex.

The first two mixed graphs $G_1$ and $G_2$ in the status ranking are shown in Figure \ref{fig:three_best}. Both graphs have the same status sequence: $\mathbf{s}(G_1): 17^4,18^6,19^1$, so $G_1 \sim G_2$. They attain the minimum value for the status norm $N_1(G_1)=8$. The undirected part of both graphs is the union of two cycles of lengths $5$ and $6$, but their directed part is different: $G_1$ has three directed cycles of lengths $4,4$ and $3$, meanwhile $G_2$ has two directed cycles of lengths $7$ and $4$. Curiously enough, $G_1$ has a non trivial automorphism (as it can be seen in figure \ref{fig:three_best}, where $G_1$ shows an axial symmetry) but $G_2$ has none. The spectrum of both $G_1$ and $G_2$ is $\{3^1,1^2,(-1)^2,\lambda_1^3,\lambda_2^3\}$, where $\lambda_1=\frac{-1+\sqrt{5}}{2}$ and $\lambda_2=\frac{-1-\sqrt{5}}{2}$. The set of eigenvalues $\{3,\lambda_1,\lambda_2\}$ is precisely the one that a mixed Moore graph should have in this case. $G_2$ is cospectral with $G_1$, and it can be obtained by applying  a recent method to obtain cospectral digraphs with  a locally line digraph. For more details, see Dalf\'o and Fiol \cite{DALFO201652}.

\begin{figure}[ht]
        \centerline{\includegraphics*[width=\textwidth]{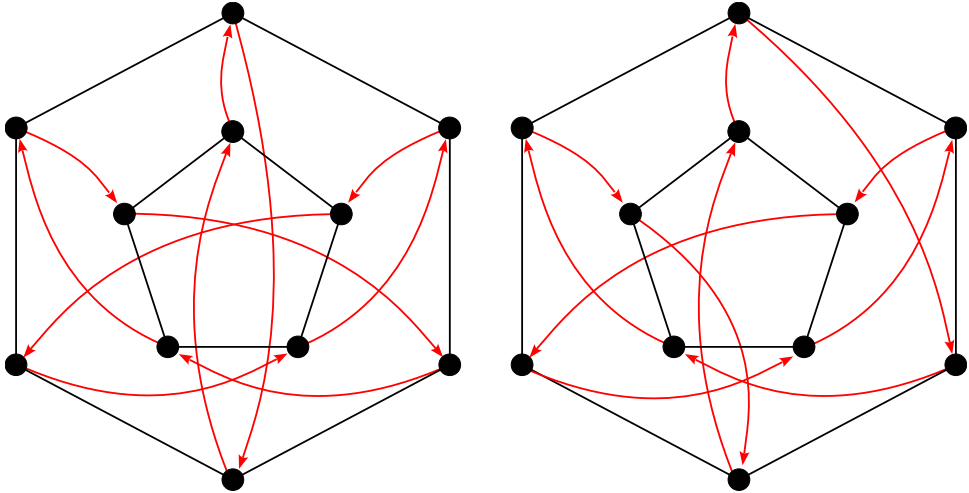}}
        \caption{The first two graphs $G_1$ and $G_2$ according to the status ranking.}
        \label{fig:three_best}
\end{figure}

\begin{proposition}
The mixed graphs $G_1$ and $G_2$ depicted in Fig. \ref{fig:three_best} have status vector $\mathbf{s}(G_1): 17^4,18^6,19^1$ and hence status norm $N_1(G_1)=18$. They are the first two ranked $(2,1,2)$-mixed radial Moore graphs according to the status norm.
\end{proposition}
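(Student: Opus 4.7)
The plan is to recast the status norm as a simple combinatorial quantity (the number of ordered pairs at distance $3$) and then combine a closed-form lower bound with the exhaustive list produced by the search of Section~\ref{sec:ranking}.

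For a hypothetical $(2,1,2)$-mixed Moore graph, the recurrence \eqref{recurNi} with $N_0=1,N_1=3$ gives $N_2=7$, so every vertex would have status $0+3+2\cdot 7=17$ and $\mathbf{s}_{2,1,2}=17^{11}$. Now let $G\in{\cal RM}(2,1,2)$ and write $n_i(v)$ for the number of vertices at distance $i$ from $v$. A central vertex has $n_3=0$ and therefore status exactly $17$, while a non-central vertex $u$ satisfies $n_2(u)+n_3(u)=7$, giving
\[
s(u)=3+2n_2(u)+3n_3(u)=17+n_3(u),\qquad n_3(u)\geq 1.
\]
Consequently $s(v)\geq 17$ for every $v$, and
\[
N_1(G)=\sum_{v\in V}(s(v)-17)=\sum_{u\text{ non-central}} n_3(u).
\]

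Next I would verify the status vector of $G_1$ and $G_2$ directly from Figure~\ref{fig:three_best}. After checking that they are totally $(2,1)$-regular of order $11$, radius $2$ and diameter $3$, a breadth-first sweep from each of the eleven vertices yields $n_3$-values distributed as $0^4,1^6,2^1$, so both $G_1$ and $G_2$ have status vector $17^4,18^6,19^1$ and status norm $N_1=8$. Non-isomorphism follows from the invariants recorded just before the statement of the proposition (their directed-cycle length multisets $\{3,4,4\}$ and $\{4,7\}$ differ), so $G_1$ and $G_2$ are genuinely two distinct graphs in the same status class.

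It remains to show $N_1(G)\geq 8$ for every $G\in{\cal RM}(2,1,2)$. Let $c$ denote the number of central vertices of $G$; by Table~\ref{table:center_girth} we have $c\leq 4$, and the identity above yields $N_1(G)\geq 11-c\geq 7$. The main obstacle is ruling out $N_1=7$, which would force $c=4$ together with $n_3(u)=1$ for each of the seven non-central vertices simultaneously. I would close this gap by inspecting the $38$ mixed radial Moore graphs with $c=4$ returned by the enumeration of Section~\ref{sec:ranking} and checking that in each of them at least one non-central vertex has $n_3\geq 2$. Combined with the previous step, this pins $G_1$ and $G_2$ at the top two positions of the ranking.
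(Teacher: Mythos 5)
Your reduction $s(u)=17+n_3(u)$ and hence $N_1(G)=\sum_u n_3(u)\geq 11-c$ is correct and is a nice analytic supplement to what the paper does (the paper offers no written proof here; the proposition rests entirely on the exhaustive enumeration of the $9486$ graphs described at the start of Section~\ref{sec:ranking}). Two remarks before the main issue. First, your value $N_1(G_1)=8$ is the right one: the status vector $17^4,18^6,19^1$ against $\mathbf{s}_{2,1,2}=17^{11}$ gives $6\cdot 1+1\cdot 2=8$, consistent with the text preceding the proposition; the ``$18$'' in the statement is an internal inconsistency of the paper. Second, the identity $s(u)=17+n_3(u)$ tacitly assumes $|\Gamma^+(u)|=3$ for every vertex; if an edge and an arc from $u$ shared an endpoint one would only get $s(u)=20-n_1(u)+n_3(u)\geq 17+n_3(u)$, but since the inequality still points the right way this does not affect your lower bound.

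The genuine gap is in the last step. Ruling out $N_1=7$ shows that $G_1$ and $G_2$ attain the minimum of $N_1$, but that is not what ``first two ranked'' means: the order on ${\cal RM}(2,1,2)/\!\sim$ is lexicographic on the whole sequence $(N_1,N_2,\dots)$, so ties in $N_1$ are broken by $N_2$. A graph with $c=3$ central vertices and $n_3(u)=1$ for all eight non-central vertices would have status vector $17^3,18^8$, hence $N_1=8$ but $N_2=\sqrt{8}<\sqrt{10}=N_2(G_1)$, and would therefore rank \emph{above} $G_1$ and $G_2$. Your argument never excludes this configuration. To close the proof you must additionally inspect the $13$ graphs with $c=3$ from Table~\ref{table:center_girth} and verify none realizes $17^3,18^8$, and you must also check that no third non-isomorphic graph shares the vector $17^4,18^6,19^1$ (otherwise ``the first two'' would be false as stated). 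At that point your argument is simply invoking the same exhaustive search the paper relies on, which is legitimate, but as written the lower bound on $N_1$ alone does not pin $G_1$ and $G_2$ to the top of the ranking.
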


\section{Existence of $(r,z,2)$-mixed radial Moore graphs}\label{sec:exist}

\subsection{A family of $(r,1,2)$-mixed radial Moore graphs.}\label{s:rz1k}

We begin by constructing the mixed Moore tree anchored at a distinguished vertex 0. The out-neighbors of $0$ is the set of integers $\{1,2,\dots,r+1\}$ being $r+1$ adjacent with an arc from $0$. The vertices at distance $2$ from the root are labelled with as an ordered pair of integers $(i,j)$ where $i$ indicates that the vertex is pending from vertex $i$ and $j$ is used to establish an order among the vertices pending from $i$. Note that the subtree that pends from vertex $r+1$ has an extra vertex compared with the other subtrees pending from $1$ to $r$ (see figure \ref{fig:MTz1}). 

\begin{figure}[ht]
        \centerline{\includegraphics*[width=\textwidth]{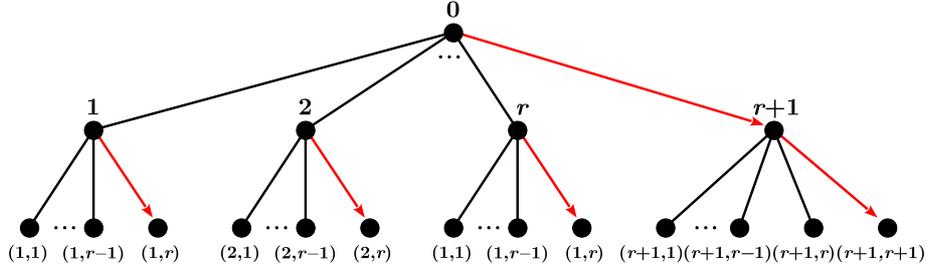}}
        \caption{Moore tree for $r>2$, $z=1$ and $k=2$, rooted at vertex $0$.}
        \label{fig:MTz1}
\end{figure}

The proposed graph family $H_r$ is constructed through the addition of the following edges to the Moore tree for $r>2$, $z=1$ and $k=2$:

\begin{enumerate}
    \item Edges $(i,j) \sim (u,v)$ such that
    \begin{equation*}
        j-i\equiv v-u \pmod{r+1},
    \end{equation*}
    where $i,u=1,2,\ldots,r+1$ and $j,v=1,2,\ldots,r$. Notice that  vertex $(r+1,r+1)$ is not included in this list.
    \item $(i,r) \sim (r+1,r+1)$ for every $i=1,\ldots,r$.
\end{enumerate}
The following arcs are also included in $H_r$:
\begin{enumerate}
\setcounter{enumi}{2}
    \item The arc $(r+1,r+1) \rightarrow 0$.
    \item Arcs $(r+1,i) \rightarrow r+1-i$, for every $i=1,\ldots,r$.
    \item Arcs from $(i,r)$ to $(r+1,r+1-i)$, for every $i=1,\ldots,r$.
    \item $(i,j) \rightarrow (r+i-j,r-j)$, for every $i=1,\ldots,r$ and $j=1,\ldots,r-1$ such that $j-i\not\equiv r \pmod{r+1}$, where indices in the last vertex must be taken modulus $r+1$.
    \item $(i,j) \rightarrow (i-1,j)$, for every $i=1,\ldots,r$ and $j=1,\ldots,r-1$ such that $j-i\equiv r \pmod{r+1}$. 
    
\end{enumerate}

\begin{figure}[ht]
        \centerline{\includegraphics[width=\textwidth]{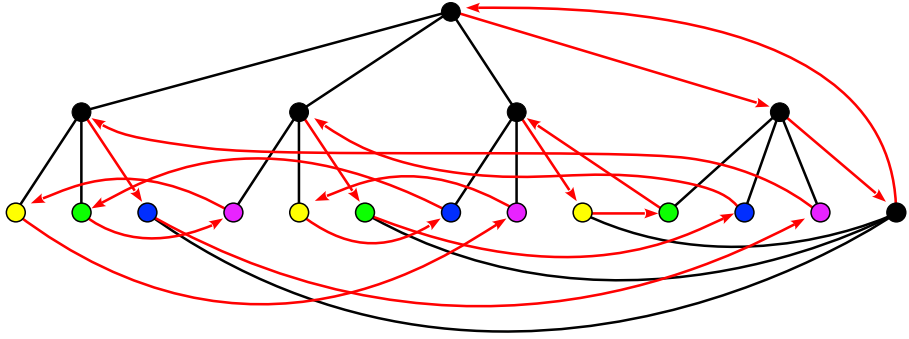}}
        \caption{Graph $H_3$. In order to keep the figure clean some adjacencies are indicated through colors (distinct from black). There is an edge joining vertices of the same color.}
        \label{H3}
\end{figure}

\begin{proposition}
Let $r>2$ be a positive integer. Then $H_r$ is a totally $(r,1)$-regular mixed graph.
\end{proposition}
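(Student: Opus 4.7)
The plan is to check directly, vertex by vertex, that $d(v)=r$, $d^+(v)=1$ and $d^-(v)=1$ throughout $H_r$, tallying the contributions of the Moore tree together with items 1--7. I would partition $V(H_r)$ into the root $0$, the level-$1$ vertices $\{1,\ldots,r+1\}$, and the level-$2$ labelled pairs. The root and the level-$1$ cases are routine: the tree already supplies the $r$ edges and the lone out-arc at $0$, and item 3 gives its only in-arc; for each $i\in\{1,\ldots,r\}$ the tree provides the $r$ edges to $\{0\}\cup\{(i,1),\ldots,(i,r-1)\}$ and the out-arc to $(i,r)$, while item 4 furnishes the unique in-arc $(r+1,r+1-i)\to i$; and $r+1$ inherits all of its adjacencies from the tree alone.

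The undirected count at level $2$ hinges on a simple lemma I would state first: the relation of item 1, $j-i\equiv v-u\pmod{r+1}$, partitions the set of pairs $(i,j)$ with $i\in\{1,\ldots,r+1\}$ and $j\in\{1,\ldots,r\}$ into $r+1$ classes of exact size $r$. Fixing a residue $c$, as $i$ ranges over $\{1,\ldots,r+1\}$ the congruence $j\equiv i+c\pmod{r+1}$ admits $j\in\{1,\ldots,r\}$ for every $i$ except the unique one with $j\equiv 0$, losing exactly one element per class. Consequently item 1 contributes $r-1$ edges at every vertex with second coordinate in $\{1,\ldots,r\}$. Combining this with the tree edge to the parent, for the families $(i,j)$ with $i\le r,\ j\le r-1$ and $(r+1,j)$ with $j\le r$, and with item 2 for the family $(i,r)$ and the isolated vertex $(r+1,r+1)$, yields undirected degree exactly $r$ at every level-$2$ vertex.

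The arc analysis at level $2$ is the technical heart of the proof. Items 3--5 are transparent bijections: item 3 is the single arc $(r+1,r+1)\to 0$ (whose in-arc comes from the tree); item 4 is the bijection $(r+1,i)\mapsto r+1-i$ between the edge-children of $r+1$ and the vertices $\{1,\ldots,r\}$; and item 5 is the bijection $(i,r)\mapsto(r+1,r+1-i)$ between the arc-children of $\{1,\ldots,r\}$ and the edge-children of $r+1$. The hard part, which I expect to be the main obstacle, is to show that items 6 and 7 together define a self-bijection of the set $A=\{(i,j):1\le i\le r,\ 1\le j\le r-1\}$ of edge-children of $\{1,\ldots,r\}$. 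Well-definedness is already built into the dichotomy: the condition $j-i\not\equiv r\pmod{r+1}$ in item 6 is exactly what prevents $(r+i-j)\bmod(r+1)$ from collapsing to $r+1$, while the complementary case forces $i=j+1$ so that item 7 lands at $(j,j)\in A$. For bijectivity I would argue fibrewise over the second coordinate $b$: item 6 applied to $\{(i,r-b):i\ne r-b+1\}$ acts on first coordinates by the injective shift $i\mapsto(i+b)\bmod(r+1)$, whose image is precisely $\{1,\ldots,r\}\setminus\{b\}$, while item 7 supplies the missing image $(b,b)$ from its unique source $(b+1,b)$. This delivers exactly one in-arc at each vertex of $A$ and completes the verification of total $(r,1)$-regularity.
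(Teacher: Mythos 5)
Your proposal is correct and follows essentially the same route as the paper: a step-by-step tally of undirected and directed degrees, the observation that the congruence of item 1 partitions the $r(r+1)$ level-two vertices into $r+1$ classes of size $r$ (each contributing $r-1$ edges), and a bijection argument showing items 6--7 give every vertex of $A$ in-degree one. The only minor difference is that you establish the items 6--7 bijection by computing the image fibrewise as an explicit shift $i\mapsto (i+b)\bmod (r+1)$, whereas the paper argues via injectivity of item 6 plus disjointness of the item-6 and item-7 target sets; both reduce to the same residue computations.
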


\begin{proof}
Let us start proving the undirected regularity. In the Moore tree it is clear that vertices $i=0,1,\ldots,r+1$ have indeed undirected degree $r$. All vertices $(i,j)$ such that $i=1,\ldots,r+1$ and $j=1,\ldots,r-1$ and $(r+1,r)$ have undirected degree $1$. Finally $(i,r)$ vertices such that $i=1,\ldots,r$ and $(r+1,r+1)$ have undirected degree $0$. 
The edges added in step 2 increase the undircted degree of $(i,r)$ vertices such that $i=1,\ldots,r$ to $1$, and the undirected degree of $(r+1,r+1)$ to $r$. By now all vertices have degree $r$ but $(i,j)$ s.t. $i=1,2,\ldots,r+1$ and $j=1,2,\ldots,r$ which have undirected degree $1$. This set has cardinal $r(r+1)$. Precisely, the relationship stated in step 1 is an equivalence relation in this set and thus, it establishes a partition of $r+1$ classes, each one made up of $r$ vertices. For every element, an edge is drawn to the other members of the class, adding $r-1$ to the undirected degree, resulting a total of $r$.

The only vertex with indegree and outdegree equal to $1$ in the Moore tree is $r+1$. Step 3 completes the directed degrees of vertices $0$ and $(r+1,r+1)$. Vertices $i$ for $i=1,\ldots,r$ already have outdegree $1$, and the indegree turns $1$ after step 4. This step also increases by $1$ the outdegree of all vertices $(r+1,j)$. Step 5 completes the directed degrees of these vertices, since every $(r+1,j)$ receives an arc, and also completes vertices $(i,r)$ for $i=1,\ldots r$, since they already had indegree $1$ and after step 5 their outdegree is also $1$. It is easy to see that steps 6 and 7 make the outdegree of $(i,j)$ vertices s.t. $i=1,\ldots,r$ and $j=1,\ldots,r-1$ to be $1$, but it is less obvious that it also increases the indegree to $1$. To prove so, it is only necessary to show that there are not different arcs in steps 6 and 7 that are incident to the same vertex. Suppose that $(a,b)$ and $(c,d)$ with the conditions of step 6 are different vertices incident to the same vertex through the assignation of step 6. Then, 
\begin{equation*}
    r+a-b\equiv r+c-d\pmod{r+1}\quad\textrm{and}\quad r-b\equiv r-d\pmod{r+1}.
\end{equation*}
The second equivalence implies $b=d$ since $1 \leq b,d \leq r-1$ are integers that cannot differ by $r+1$. Introducing $b=d$ in the first equivalence entails $a\equiv c\pmod{r+1}$. Again this implies $a=c$, since $1 \leq a,c\ \leq r$ are integers that cannot differ by $r+1$. Thus $(a,b)$ and $(c,d)$ represent the same vertex, a contradiction that proves the statement. To end the proof, it is enough to see that the set of vertices that receive the arcs in step 6 are different from the ones that receive the arcs of step 7. In step 7 arcs are incident to vertices $(\alpha,\beta)=(i-1,j)$ s.t. $j-i\equiv r \pmod{r+1}$ and thus, they belong to the class of vertices such that $\beta-\alpha\equiv j-i+1 \equiv 0\pmod{r+1}$ while in step 6 arcs are incident to vertices $(\gamma,\delta)=(r+i-j,r-j)$ s.t. $j-i\not\equiv r \pmod{r+1}$ and thus $\delta-\gamma\equiv -i \not\equiv 0\pmod{r+1}$ since $1 \leq i \leq r$.
\end{proof}

\begin{proposition}
 For every $r>2$, $H_r$ is a $(r,1,2)$-mixed radial Moore graph with status vector:
\[
\begin{array}{ll}
\mathbf{s}(H_r): & (2r^2+3r+3)^{2},(2r^2+3r+4)^{r-1},(2r^2+4r+2)^{2},(3r^2+5)^{r^2-2r-1}, \\
 & (3r^2+6)^{5},(3r^2+7)^{2r-3},(3r^2+r+5)^{r-3},(3r^2+r+6)^{2}.  
\end{array}
\]
\end{proposition}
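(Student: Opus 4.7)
The plan is to verify in parallel that $|V(H_r)|=M(r,1,2)$, $\rad(H_r)=2$, $\diam(H_r)\le 3$, and that the status vector is as claimed. The first two assertions follow almost for free from the construction: the Moore tree anchored at $0$ has $1+(r+1)+(r^2+r+1)=r^2+2r+3=M(r,1,2)$ vertices, and rules (1)--(7) only add edges and arcs, so $|V(H_r)|=M(r,1,2)$; and since the Moore tree sits inside $H_r$ as a spanning subgraph, vertex $0$ reaches every other vertex in at most $2$ steps, hence $\rad(H_r)\le\ecc(0)=2$.

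For the status, the key reduction is that $H_r$ is totally $(r,1)$-regular by the previous proposition, so every vertex has exactly $r+1$ out-neighbors. Letting $N_i(v)$ denote the number of vertices at distance exactly $i$ from $v$, the identity $1+(r+1)+N_2(v)+N_3(v)=r^2+2r+3$ (valid once we verify $\diam(H_r)\le 3$) combined with $s(v)=(r+1)+2N_2(v)+3N_3(v)$ gives
\[
s(v)\;=\;2r^2+3r+3+N_3(v).
\]
So the claimed status vector is equivalent to the statement that the multiset $\{N_3(v)\}_{v\in V(H_r)}$ equals $\{0^2,\,1^{r-1},\,(r-1)^2,\,(r^2-3r+2)^{r^2-2r-1},\,(r^2-3r+3)^5,\,(r^2-3r+4)^{2r-3},\,(r^2-2r+2)^{r-3},\,(r^2-2r+3)^2\}$. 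In particular exactly two vertices must be central; one is $0$, and a direct check using rules (1), (6), (7) identifies the other as the edge-neighbor $1$ of the root, whose second out-neighborhood covers the whole graph.

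The remaining work, and the main obstacle, is the class-by-class computation of $N_3(v)$ for a representative in each of the natural Moore-tree classes: $\{2,\ldots,r\}$, $\{r+1\}$, the ``generic'' second-level vertices $(i,j)$ with $i\in\{1,\ldots,r\}$ and $j\in\{1,\ldots,r-1\}$ (further split according to whether $j-i\equiv r\pmod{r+1}$, since the exceptional rule (7) acts precisely on that subclass), $\{(i,r):1\le i\le r\}$, $\{(r+1,j):1\le j\le r\}$, and $\{(r+1,r+1)\}$. For each representative I would list the out-neighbors via rules (1)--(7), compose once more to determine $N_2(v)$, and exploit the modular structure of rule (1)---two vertices share a step-(1) edge iff they lie in the same residue class of $j-i\pmod{r+1}$---to describe the second neighborhood in closed form. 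The delicate point is handling the interaction of rules (6) and (7) across the residue class $r\pmod{r+1}$, which both produces the two largest $N_3$-values $r^2-2r+2$ and $r^2-2r+3$ and is where one must explicitly exhibit 3-paths from each vertex to any vertex it misses at distance $\le 2$, thereby securing the diameter bound. Once $N_3(v)$ is tabulated by class and multiplied by the corresponding class size, matching against the exponents in the claimed status vector completes the proof.
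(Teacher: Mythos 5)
Your approach is essentially the paper's: the authors likewise reduce the status of each vertex $v$ to $2r^2+3r+3+N_3(v)$ (implicitly, via a table listing $n_3(V_i)$ for each class), partition the vertices into Moore-tree classes, sketch the first and second neighborhoods of a representative of each class, and then exhibit explicit length-$3$ paths to every vertex missed at distance $\leq 2$, which simultaneously secures the diameter bound. The class-by-class computation that you flag as the ``remaining work'' is indeed the entire substance of the argument, and the paper itself only carries it out in detail for a few classes before tabulating the rest, so your plan matches the published proof in both structure and level of completion.
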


\begin{proof} 

The proof will proceed as follows. For every vertex $u\in V(H_r)$ we will find the number of vertices $v\in V(H_r)$ such that $d(u,v)\leq2$ trough a sketch of the first and second neighbors. Then, every vertex which is not present in the sketch $w\in V(H_r)$ such that $d(u,w)>2$ is listed and we proof that, indeed, $d(u,w)=3$ identifying the explicit $u-w$ path.


We will proceed in the mentioned manner, lumping vertices in subsets if the calculation can be made analogously. The number associated to a given edge in figures \ref{ES2}, \ref{ES9} and \ref{ES12} indicates the step in the construction of $H_r$ where this connection was added.

\begin{itemize}
    \item Vertex $0$ is clearly a central vertex, so it status is $\mathbf{s}_{r,1,2}=2r^2+3r+3$.

\begin{figure}[ht]
        \centerline{\includegraphics[width=\textwidth]{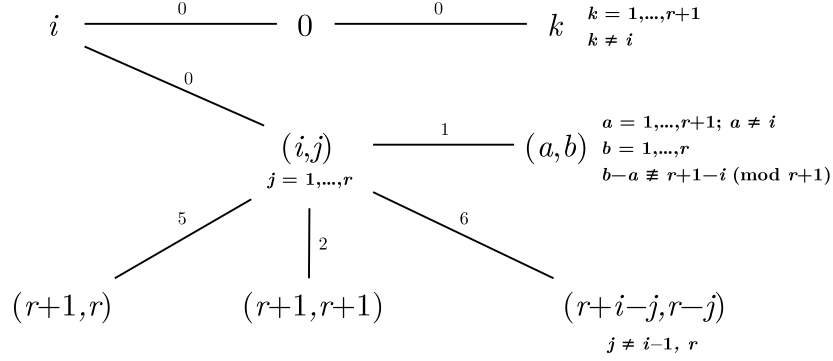}}
        \caption{Neighbors at distance two of vertex $i=2,\ldots,r$.}
        \label{ES2}
\end{figure}
    \item For vertices $i=1,\ldots,r$ the sketch is shown in figure \ref{ES2}. Vertex $1$ is a central vertex, but it is not the case for $i=2,\ldots,r$, where only vertex $(i-1,r)$ remains at distance three through the path:
     $$i\xrightarrow{\text{ 0 }}0\xrightarrow{\text{ 0 }}i-1\xrightarrow{\text{ 0 }}(i-1,r).$$
     Hence, this contributes to the status vector in $2r^2+3r+3,(2r^2+3r+4)^{r-1}$.
     
     \item For vertices $(i,r)$ s.t. $i=2,\ldots,r-1$ the sketch is shown in figure \ref{ES9}.
     
         \begin{figure}[h]
            \centerline{\includegraphics[width=\textwidth]{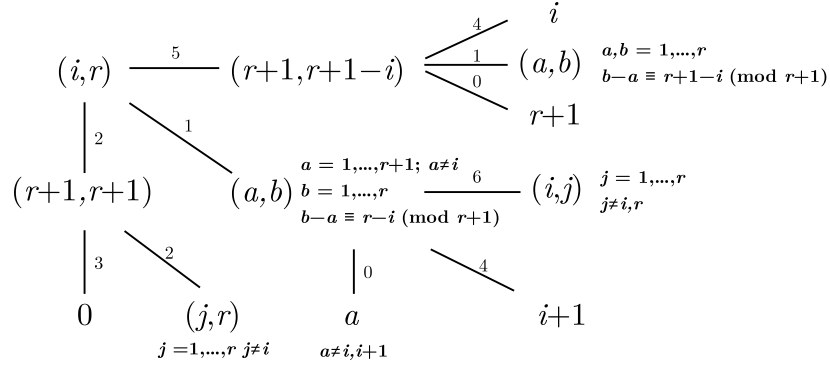}}
            \caption{Neighbors at distance one and two of vertex $(i,r)$ with $i=2,\ldots,r-1$.}
            \label{ES9}
        \end{figure}
        
        Remaining vertices:
        \begin{itemize}
            \item Vertex $(i,i)$:
            $$(i,r)\xrightarrow{\text{ 5 }}(r+1,r+1-i)\xrightarrow{\text{ 4 }}i\xrightarrow{\text{ 0 }}(i,i)$$
            
            \item Vertices $(r+1,j)$ s.t. $j=1,\ldots,r$ and $j\neq r-i,r+1-i$, a total of $r-2$ vertices:
            $$(i,r)\xrightarrow{\text{ 5 }}(r+1,r+1-i)\xrightarrow{\text{ 0 }}r+1\xrightarrow{\text{ 0 }}(r+1,j)$$
        
            \item Vertices $(i-1,j)$ s.t. $j=1,\ldots,r-2$, a total of $r-2$ vertices:
            $$(i,r)\xrightarrow{\text{ 1 }}(i-1,r-1)\xrightarrow{\text{ 0 }}i-1\xrightarrow{\text{ 0 }}(i-1,j)$$
        
            \item Vertices $(i+1,j)$ s.t. $j=2,\ldots,r$, a total of $r-2$ vertices:
            $$(i,r)\xrightarrow{\text{ 1 }}(r+1,r-i)\xrightarrow{\text{ 0 }}i+1\xrightarrow{\text{ 0 }}(i+1,j)$$
            
            \item Vertices $(\alpha,\beta)$ s.t. $\alpha=1,\ldots,r-1$; $\beta=1,\ldots,r-1$ and $\beta-\alpha\not\equiv r-i,r-i+1 \pmod{r+1}$ and $\alpha\neq i-1,i,i+1$, a total of $(r-3)(r-3)$ vertices:
            $$(i,r)\xrightarrow{\text{ 1 }}(\alpha,\alpha+r-i)\xrightarrow{\text{ 0 }}\alpha\xrightarrow{\text{ 0 }}(\alpha,\beta)$$
            
        \end{itemize}
        There is a total of $r^2-3r+4$ vertices at distance 3. So, they contribute in $(3r^2+7)^{r-2}$ to the status vector.

        \begin{figure}[h]
                \centerline{\includegraphics[width=\textwidth]{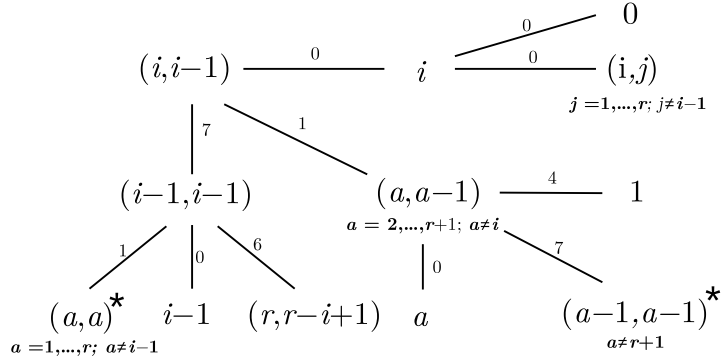}}
                \caption{Neighbors at distance one and two of vertex $(i,i-1)$ with $i=3,\ldots,r-1$. The stars point out the fact that there are repeated vertices in the tagged subsets.}
                \label{ES12}
        \end{figure}
        \item For vertices $(i,i-1)$ s.t. $i=3,\ldots,r-1$ the sketch is shown in Fig. \ref{ES12}.
        
        Remaining vertices:
        \begin{itemize}
            \item Vertex $(1,j)$ s.t. $j=2,\ldots,r$, a total of $r-1$ vertices:
            $$(i,i-1)\xrightarrow{\text{ 1 }}(r+1,r)\xrightarrow{\text{ 4 }}1\xrightarrow{\text{ 0 }}(1,j)$$
            
            \item Vertices $(r+1,j)$ s.t. $j=1,\ldots,r-1,r+1$, a total of $r$ vertices:
            $$(i,i-1)\xrightarrow{\text{ 1 }}(r+1,r)\xrightarrow{\text{ 0 }}r+1\xrightarrow{\text{ 0 }}(r+1,j)$$
            
            \item Vertices $(\alpha,\beta)$ s.t. $\alpha=2,\ldots,r-1$; $\beta=1,\ldots,r$ and $\beta-\alpha\not\equiv 0,r \pmod{r+1}$ and $(\alpha,\beta)\neq(r,r+1-i)$, a total of $(r-2)(r-2)-1$ vertices:
            $$(i,i-1)\xrightarrow{\text{ 1 }}(\alpha,\alpha-1)\xrightarrow{\text{ 0 }}\alpha\xrightarrow{\text{ 0 }}(\alpha,\beta)$$
            
        \end{itemize}
        There is a total of $r^2-2r+2$ vertices at distance 3. 
             
\end{itemize}
Analogously, one should proceed for every subset $V_i$ presented in table \ref{stab}. Column 3 contains the cardinal of each subset, and column 4 contains the number of vertex at distance $3$ from every vertex in $V_i$, denoted as $n_3(V_i)$. The collection of all subsets $V_i$ constitutes a partition of the vertex set. The status vector of $H_r$ can easily computed from this table, obtaining
\[
\begin{array}{ll}
\mathbf{s}(H_r): & (2r^2+3r+3)^{2},(2r^2+3r+4)^{r-1},(2r^2+4r+2)^{2},(3r^2+5)^{r^2-2r-1}, \\
 & (3r^2+6)^{5},(3r^2+7)^{2r-3},(3r^2+r+5)^{r-3},(3r^2+r+6)^{2}.  
\end{array}
\]
$N_1(H_r)$ can be also obtained from table \ref{stab} through $\displaystyle{N_1(H_r)=\sum_{0 \leq i\leq 9}{|V_i|\cdot n_3(V_i)}}$
obtaining $N_1(H_r)= r^4-2r^3+8r-2$.
\end{proof}

\begin{table}[htbp]
    \centering
    \begin{tabular}{|c|c|c|c|}\hline
      $i$ & $V_i$ & $|V_i|$& $n_3(V_i)$ \\\hline
      0 & $\{0,1\}$&  2 & 0\\
      1 & $\{i \ | i=2,\ldots,r\}$ & $r-1$ & 1\\
      2 & $\{r+1,(r+1,r+1)\}$&  2 & $r-1$\\
      3 & $\{(i,i) \ | \ i=2,\ldots,r-1\}$ & $r-2$ & $r^2-3r+2$\\
      4 & $\{(i,j){\neq}(1,r{-}1) \ | \ j{-}i\not\equiv 0,r\pmod{r{+}1}\}$ & $r^2-3r+1$ & $r^2-3r+2$\\
      5 & $\{(r+1,r),(1,r),(r,r),(1,1),(1,r-1)\}$ & 5 & $r^2-3r+3$\\
      6 & $\{(r+1,j) \ | \ j=1,\ldots,r-1 \}$ & $r-1$ & $r^2-3r+4$\\
      7 & $\{(i,r) \ | \ i=2,\ldots,r-1 \} $& $r-2$ & $r^2-3r+4$\\
      8 & $\{(i,i-1) \ | \ i\neq2,r\}$ & $r-3$ & $r^2-2r+2$\\
      9 & $\{(2,1),(r,r-1)\}$ & 2 & $r^2-2r+3$\\


        \hline
    \end{tabular}
    \caption{Partition of the vertices set of $H_r$ showing the number of vertices at distance $3$ for any given vertex of $V_i$.}
    \label{stab}
\end{table}

\subsection{An optimal family of $(1,z,2)$-mixed radial Moore graphs.}\label{s:r1zk}

Since mixed radial Moore graphs are presented as a kind of approximation of mixed Moore graphs, it is natural to think that their existence is guaranteed in the cases when a mixed Moore graph indeed exists. Moreover, the `closest' mixed radial Moore graph to being a mixed Moore graph could be obtained through a slight modification of the latter. 
\begin{figure}[htb]
\begin{center}
\begin{tabular}{ccc}
\includegraphics*[width=0.3\textwidth]{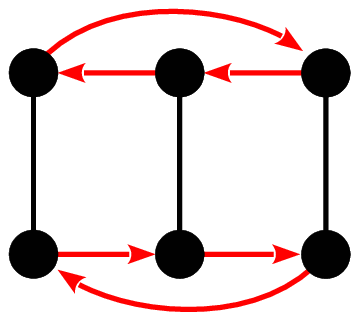} & \hspace{0.1\textwidth} &  \includegraphics*[width=0.3\textwidth]{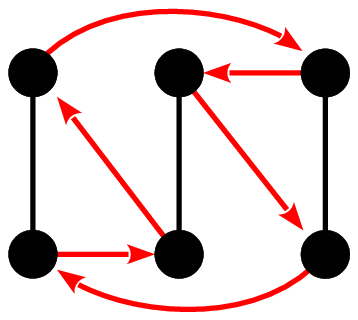} \\
$\mathbf{s}(G): 8^6$ & & $\mathbf{s}(G^*): 8^4,9^2$ \\
 & & $N_1(G^*)=2$
\end{tabular}
\caption{The unique mixed Moore graph $G$ on $6$ vertices and a mixed radial Moore graph $G^*$ obtained by an arc swap to $G$. Moreover, $G^*$ is the closest graph to $G$ according to the status norm.} \label{fig:kautz1}
\end{center}
\end{figure}

For instance, the mixed graph depicted in Fig. \ref{fig:kautz1} is precisely the unique mixed Moore graph for parameters $(r,z,k)=(1,1,2)$, which corresponds to the Kautz digraph on six vertices. It is not difficult to check that the mixed graph on its right is a mixed radial Moore graph and moreover, it is the closest mixed graph according to the status norm. This mixed graph has been obtained by a simple swap of two arcs of the Moore graph. In this new graph only two vertices have a slight modification of their status, incrementing just in one unit the value that they had in the original graph. The remaining vertices preserve the status that they had.

This particular result can be generalized. To see this, let $(u,u',v,v')$ be four (ordered) vertices of a mixed graph $D=(V,E,A)$ so that $uv,u'v' \in A$ and $uv',u'v \notin A\cup E$. The arc swap operation removes the two arcs $uv,u'v'$ and adds $uv'$ and $u'v$ to $A$ (Fig.~\ref{fig:swap}). Notice that this operation preserves the degrees of the vertices in the resulting mixed graph.

\begin{figure}[htb]
\begin{center}
\includegraphics*[scale=.5]{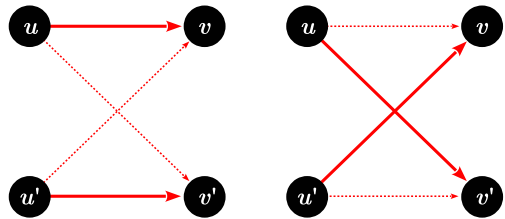} 
\caption{Arc swap operation.} \label{fig:swap}
\end{center}
\end{figure}

It turns out that, under certain conditions, every mixed graph constructed by an arc swapping to a proper mixed Moore graph of diameter $2$ is a $(r,z,2)$-mixed radial Moore graph, and moreover, this mixed graph is the closest graph to being a Moore graph.

\begin{theorem}\label{th:exist}
Let $G^*$ be the mixed graph obtained by an arc swapping of some vertices $(u,u',v,v')$ in a proper mixed Moore graph $G$ of diameter $2$, such that $\Gamma^-(u)=\Gamma^-(u')$ and $\Gamma^+(v)=\Gamma^+(v')$. Then $G^*$ is a $(r,z,2)$-mixed radial Moore graph with $N_1(G^*)=2$.
\end{theorem}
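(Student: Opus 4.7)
My plan is to prove the theorem by computing the distance function of $G^*$ in terms of that of $G$, then reading off the conclusions about radius, diameter, and status.

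I begin with the structural preliminaries. Arc swapping preserves every vertex's in-degree, out-degree, and undirected degree, so $G^*$ is totally $(r,z)$-regular with the same order $M(r,z,2)$ as $G$. Next, using the fact that mixed Moore graphs of diameter $2$ admit no digons, the existence of the arc $u \to v$ (together with the no-double-adjacency convention) rules out both $v \to u$ and the edge $v$--$u$. Combining this with the hypotheses $\Gamma^-(u)=\Gamma^-(u')$ and $\Gamma^+(v)=\Gamma^+(v')$, one deduces that $u$ and $u'$ (respectively $v$ and $v'$) are non-adjacent in $G$, and moreover $v' \not\to u$ and $v \not\to u'$; in particular, the four vertices $u, u', v, v'$ are pairwise distinct. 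The equalities $\Gamma^-(u)=\Gamma^-(u')$ and $\Gamma^+(v)=\Gamma^+(v')$ remain valid in $G^*$, since the swap only alters arcs leaving $\{u,u'\}$ and arcs entering $\{v,v'\}$.

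The core of the argument is the distance identification
\[
d_{G^*}(x,y) = d_G(x,y) \le 2 \qquad\text{for every } (x,y)\notin\{(u,v),(u',v'),(u,v'),(u',v)\},
\]
together with $d_{G^*}(u,v) = d_{G^*}(u',v') = 3$ and $d_{G^*}(u,v') = d_{G^*}(u',v) = 1$. For a generic pair $(x,y)$, the unique shortest walk of length $\leq 2$ from $x$ to $y$ in $G$ either uses none of the removed arcs (and so survives in $G^*$), or it takes one of the forms $x \to u \to v$, $u \to v \to y$, $x \to u' \to v'$, or $u' \to v' \to y$. In the first case, $x \in \Gamma^-(u)=\Gamma^-(u')$ supplies the substitute walk $x \to u' \to v$ in $G^*$ via the new arc $u' \to v$; the other three forms are rerouted symmetrically by the same mechanism. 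Conversely, the newly added arcs never strictly shorten any distance, except on the pairs $(u,v')$ and $(u',v)$, where the distance drops from $2$ to $1$.

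The main obstacle is establishing the upper bound $d_{G^*}(u,v) \le 3$ (and symmetrically for $(u',v')$). The lower bound $\ge 3$ is immediate from the preceding case analysis: no length-$\le 2$ walk from $u$ to $v$ survives or is newly created. For the upper bound I would exhibit the explicit walk $u \to v' \to w \to v$, where $v' \to w \to v$ is the Moore walk from $v'$ to $v$ in $G$ (of length exactly $2$, since $v$ and $v'$ are non-adjacent). The delicate verification is that this Moore walk survives in $G^*$: its only possible conflict with a removed arc would be at $w=u$, but this is precluded by $v' \not\to u$.

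Collecting the distance changes, $d(u,v)$ and $d(u',v')$ each jump from $1$ to $3$, while $d(u,v')$ and $d(u',v)$ each drop from $2$ to $1$. Hence $s_u$ and $s_{u'}$ each increase by exactly $(3-1)+(1-2)=1$ and every other status is unchanged, so $\mathbf{s}(G^*)-\mathbf{s}_{r,z,2}$ has exactly two nonzero entries, both equal to $1$, giving $N_1(G^*)=2$. The eccentricity of $u$ in $G^*$ becomes $3$, attained at $v$, and similarly for $u'$ at $v'$, while every other vertex retains eccentricity $2$. Therefore $G^*$ has radius $2$, diameter $3$, order $M(r,z,2)$, and is totally $(r,z)$-regular, completing the verification that $G^*$ is a $(r,z,2)$-mixed radial Moore graph.
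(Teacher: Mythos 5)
Your proposal is correct and follows essentially the same route as the paper: reroute every length-$\le 2$ shortest path that used a removed arc via the hypotheses $\Gamma^-(u)=\Gamma^-(u')$ and $\Gamma^+(v)=\Gamma^+(v')$, show the pairs $(u,v)$ and $(u',v')$ land at distance exactly $3$, and read off radius $2$, diameter $3$ and $N_1(G^*)=2$. The only (inessential) divergence is the length-$3$ witness: you route $u\to v'$ followed by the surviving Moore path $v'\to w\to v$ (which forces you to check $v'\not\to u$), whereas the paper picks $w\in\Gamma^+(v)=\Gamma^+(v')$ with $vw$ an undirected edge so that the closing step $w\to v$ cannot be a removed arc.
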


\begin{proof}
Assume that $(u,u',v,v')$ are four different vertices such that $\Gamma^-(u)=\Gamma^-(u')$ and $\Gamma^+(v)=\Gamma^+(v')$. Let us see what happens with the shortest paths in $G^*$. We divide the proof according to the length of the shortest paths in $G$:
\begin{itemize}
 \item Let $w_0,w_1,w_2$ be a shortest path (of length $2$) in $G$. Once the arc swap $(u,u',v,v')$ is done, we will see that either the same path $w_0,w_1,w_2$ is a shortest path in $G^*$ or there is an alternative path with length $\leq 2$. For the case $w_0=u$ [$w_0=u'$] and $w_2=v'$ [$w_2=v$], then the arc swap produces a shortest path. For the remaining cases, we will see that the length of the path is preserved. Indeed, if $w_i \neq u,u'$ for $i=0,1$ then the same path is a shortest path in $G^*$. If $w_1=u$ and $w_2 \neq v$ the same path is also a shortest path in $G^*$, and the shortest path $w_0,u,v$ in $G$ becomes $w_0,u',v$ in $G^*$ since $w_0 \in \Gamma^-(u)$ and by assumption $\Gamma^-(u)=\Gamma^-(u')$. The same argument applies when $w_1=u'$ by exchanging $u$ by $u'$. It remains to consider the case when $w_0=u$ or $w_0=u'$ (again, by symmetry, we can consider just one case, say $w_0=u$): when $w_1 \neq v,v'$ the same shortest path is valid and for $w_1=v$ or $v'$ we obtain a shortest path in $G_z$ just exchanging $v$ by $v'$ (due to the arc swapping) since $\Gamma^+(v)=\Gamma^+(v')$.
\item For shortest paths of length one, we only have to consider paths $u,v$ and $u',v'$ (of length $1$) in $G$, which no longer exist in $G^*$. We will prove that there is a path of length three from $u$ to $v$ and there is no one of length $2$ (the proof for the case $u',v'$ follows the same ideas). Since $\Gamma^+(v)=\Gamma^+(v')$ and $r\geq 1$, then it exists $w \in \Gamma^+(v)$ such that $vw$ is an edge and $v'w$ is an arc. So $u,v',w,v$ is the desired path. Assume that there is a path of length $2$ in $G^*$ from $u$ to $v$, then it exists $w \in \Gamma^+(u)\cap \Gamma^-(v)$. Notice that this is only possible if $w=v'$ or $w=u'$, since otherwise the shortest path from $u$ to $v$ in $G$ would have length $2$. But if the path $u,v',v$ exists in $G^*$ then we would have the arc $v'v$ in $G$ contradicting the fact that $\Gamma^+(v)=\Gamma^+(v')$. The same applies to the case $u,u',v$ where we would have a contraction with $\Gamma^-(u)=\Gamma^-(u')$.
\end{itemize}
Notice that under the assumption $\Gamma^-(u)=\Gamma^-(u')$ and $\Gamma^+(v)=\Gamma^+(v')$ there are only two paths of length $3$ in $G^*$, those starting at $u$ and $u'$ and ending at $v$ and $v'$, respectively. As a consequence, every vertex in $G^*$ is a central vertex, except $u$ and $u'$, having just one vertex at distance $3$ each, that is, $G^*$ has radius $2$, diameter $3$ and $N_1(G^*)=2$. 
\end{proof}

Theorem \ref{th:exist} can be applied at least at the unique infinite family of proper mixed Moore graphs known until now.
The family of {\em Kautz digraphs}. The vertices of the {\em Kautz digraph} $\ka(d,k)$, $d \geq 1$, $k \geq 1$, are words of length $k$ on the alphabet $\Sigma=\{0,1,\dots,d\}$ without two consecutive identical numbers. There is an arc from vertex $(v_0,v_1,\dots,v_{k-1})$ to vertices $(v_1,\dots,v_{k-1},x)$, where $x \in S \setminus \{v_{k-1}\}$. It is known that $\ka(d,k)$ has order $(d+1)d^{k-1}$ and diameter $k$. Kautz digraphs $\ka(d,2)$ are the unique mixed Moore graph of diameter $2$ for every $d \geq 2$ (taking into account that every oriented digon is replaced by an edge, that is, $r=1$ and $z=d-1$) as noted in \cite{NMG07}. \\

We may construct a new family of $(1,z,2)$-mixed radial Moore graphs by performing a convinient {\em arc swap} to $\ka(1+z,2)$ such that the conditions of theorem \ref{th:exist} are fullfilled. Moreover, this is the best possible, as we will see later. First we provide a helpful lemma. 

\begin{lemma}\label{lem:lema1}
Let $G$ be a $(r,z,k)$-mixed radial Moore graph and $w\in V(G)$ be a central vertex of $G$. Suppose $w$ is adjacent to a non-central vertex $u\in V(G)$ through an edge. Then there exists another vertex in the out-neighborhood of $w$ different from $u$ which is non-central.
\end{lemma}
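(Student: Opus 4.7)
The plan is to argue by contradiction, assuming every vertex of $\Gamma^+(w)\setminus\{u\}$ is central and deriving an inconsistency. Since $u$ is non-central, pick $x\in V(G)$ with $d(u,x)=k+1$. The edge $wu$ gives $d(u,x)\le 1+d(w,x)$, and $w$ being central forces $d(w,x)\le k$, so $d(w,x)=k$. In $w$'s Moore tree the unique shortest path to $x$ reads $w\to w'\to p_2\to\cdots\to p_{k-1}\to x$; clearly $w'\neq u$, for otherwise $d(u,x)\le k-1$, so $w'\in\Gamma^+(w)\setminus\{u\}$ is central by hypothesis.

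Next I would dissect $\Gamma^-(x)$. The defining equality $|L_j(v)|=N_j$ at a central vertex $v$, combined with the walk-count identity $\sum_{y}(A^i)_{v,y}=(r+z)^i$, forces the length-$j$ walk from $v$ to each level-$j$ vertex to be unique and forbids any length-$k$ walk from $v$ to a vertex at level $<k$. Applied to $w$, every $\gamma\in\Gamma^-(x)$ lies at level $k-1$ or $k$, and only the tree-parent $p_{k-1}$ can sit at level $k-1$; writing $\Gamma^-(x)=\{p_{k-1}\}\cup S$ we get $|S|=r+z-1$ and $S\subseteq L_k(w)$. Moreover, any element of $S$ inside the subtree of $u$ would sit at tree-distance $\le k-1$ from $u$, giving $d(u,x)\le k$; hence $S$ is disjoint from the subtree of $u$.

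I would then distribute $S$ across the subtrees of $w_1,\dots,w_{r+z-1}$ in $w$'s Moore tree. For each central $w_j\in\Gamma^+(w)\setminus\{u,w'\}$, Moore uniqueness at $w$ pins $d(w_j,x)=k$; the penultimate vertex $q_j$ of the unique shortest $w_j\to x$ walk must lie in $S$ (otherwise $q_j=p_{k-1}$ and prepending $w\to w_j$ yields a forbidden length-$k$ walk from $w$ to the level-$(k-1)$ vertex $p_{k-1}$), and, by the same uniqueness applied to $q_j\in L_k(w)$, must sit in $w_j$'s subtree. The $r+z-2$ distinct $q_j$'s embed into $S$; with $|S|=r+z-1$, the pigeonhole yields either (i) an extra element $y^*\in S$ inside $w'$'s subtree, which produces a walk $w'\to\cdots\to y^*\to x$ of length $k$ beside the tree walk of length $k-1$, or (ii) two distinct elements of $S$ inside some $w_j$'s subtree, which produces a second length-$k$ walk from $w_j$ to $x$. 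Either case violates the local Moore uniqueness at the central vertex $w'$ or $w_j$ and completes the contradiction.

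The main obstacle is anchoring rigorously the \emph{local Moore polynomial} property invoked throughout: from the defining fullness $|L_i(v)|=N_i$ at a central $v$ one must conclude uniqueness of the length-$j$ walk to each level-$j$ vertex and the absence of shorter cross walks. For $k=2$ this follows immediately from $(r+z)^2=N_2+r$, the $r$ excess walks being the edge back-walks $v\to e\to v$, and already suffices for the contradiction at $w'$ (which in that case takes the transparent form that a central vertex admits no arc or edge between two distinct vertices of its out-neighbourhood). For general $k$ one iterates this counting level by level, using tree fullness to identify every non-tree walk from $v$ of length $\le k$ as a composite built from edge back-walks; this is the delicate but routine ingredient underlying the proof.
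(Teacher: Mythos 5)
Your argument is correct and follows essentially the same route as the paper's proof: assume all of $\Gamma^+(w)\setminus\{u\}$ is central, locate a vertex $x$ at distance $k$ from $w$ and $k+1$ from $u$, force one in-neighbour of $x$ at level $k$ inside each subtree rooted at a central out-neighbour of $w$, and then use the in-degree count $r+z$ at $x$ together with Moore-tree uniqueness at the central vertices to show the remaining in-neighbour has nowhere to sit. Your explicit pigeonhole on $\Gamma^-(x)$ and your flagged justification of the ``local Moore uniqueness'' properties are, if anything, slightly more careful than the paper's presentation, but the underlying argument is the same.
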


\begin{proof}

Let $\Gamma^+(w)=\{a_1,\dots,a_z,e_1,\dots,e_{r-1},u\}$ be the out-neighborhood of $w$, such that vertices $a_i$ are adjacent from $w$ through an arc and vertices $e_j$ are adjacent from $w$ through an edge. By contradiction suppose all vertices in $\Gamma^+(w)\setminus\{u\}$ are central vertices.

Since $u$ is a non-central vertex and $G$ is a $(r,z,k)$-mixed radial Moore graph there must exist a vertex $v\in V(G)$ such that $d(u,v)=k+1$ and thus $d(w,v)=k$. The distance preserving spanning tree rooted at $w$ must be a Moore tree, and $v$ must be located at distance $k$, belonging to any of the subtrees rooted at $\Gamma^+(w)\setminus\{u\}$. Without loss of generality, we can suppose that it hangs from the subtree rooted at $a_1$. Since all vertices in $\Gamma^+(w)\setminus\{u\}$ are central there must be a path of length not greater than $k$ from every $a_i$ and $e_i$ to $v$. For $a_1$ it already exists a $(k-1)$-path in the Moore Tree but it is forced that for every vertex $a_i$ [resp. {\em $e_i$}] in $\Gamma^+(w)\setminus\{u,a_1\}$ there exists a vertex $a'_i$ [resp. {\em $e'_i$}] such that $d(a_i,a'_i)=k-1$ [resp. {\em $d(e_i,e'_i)=k-1$}] and there is an adjacency (arc or edge) from $a_i$ [resp. {\em $e_i$}] to $v$ (see figure \ref{fig:proof}).

\begin{figure}[ht]
        \centerline{\includegraphics*[width=\textwidth]{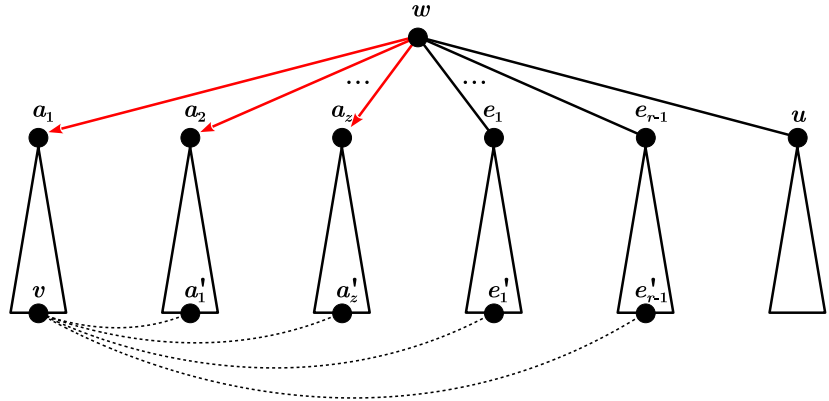}}
        \caption{Distance preserving spanning tree rooted at $w$. Dashed lines are used to represent adjacencies that might be either arcs or edges.}
        \label{fig:proof}
\end{figure}
These forced adjacencies increase in $r+z-2$ the in-degree of $v$, and counting the arc or edge that keeps $v$ hanging from the tree it results that the total in-degree of $v$ is $r+z-1$. Since $G$ is a totally $(r,z)$-regular graph, there must be another vertex $v'$ adjacent to $v$ but:
\begin{itemize}
    \item Vertex $v'$ must hold $d(w,v')=k$, otherwise $w$ would not be a central vertex.
    \item It cannot pend from the subtree rooted at $u$. This would mean $d(u,v)=k$, which would be a contradiction.
    \item It cannot pend from a vertex at distance $k-1$ from $a_1$, since $a_1$ is a central vertex and there would be two different $(a_1,v)$-paths of length less than $k$.
    \item It cannot pend from vertex $a_i$ [resp. {\em $e_i$}] in $\Gamma^+(w)\setminus\{u,a_1\}$, since $a_i$ [resp. {\em $e_i$}] is a central vertex and there would be two different $(a_i,v)$-paths [resp. {\em $(e_i,v)$-paths}]  of length $k$.
\end{itemize}

\end{proof}


\begin{corollary}
Let $G_z$ be a mixed graph obtained by an arc swapping of vertices $01,03,12,32$ in $\ka(z+1,2)$, $z\geq2$. Then $G_z$ is a $(1,z,2)$-mixed radial Moore graph, which is the closest graph to a mixed Moore graph (according to the status norm). 
\end{corollary}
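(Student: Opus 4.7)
The plan is to combine Theorem~\ref{th:exist} (applied to $\ka(z+1,2)$) with Lemma~\ref{lem:lema1} to establish both parts of the statement. First I would verify that the quadruple $(u,u',v,v')=(01,03,12,32)$ satisfies the hypotheses of Theorem~\ref{th:exist}. The four vertices are distinct valid Kautz strings once $z\geq 2$ (which guarantees that the alphabet contains $\{0,1,2,3\}$); the Kautz arc rule $(v_0,v_1)\to(v_1,x)$ produces $01\to 12$ and $03\to 32$ as arcs, and neither becomes an edge because the would-be reverse arcs $12\to 01$ and $32\to 03$ do not exist; moreover, neither $01\to 32$ nor $03\to 12$ lies in $A\cup E$, since the first coordinate of the target must equal the second coordinate of the source. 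Computing neighborhoods, the predecessors of both $01$ and $03$ are precisely the strings $(y,0)$ with $y\neq 0$, so $\Gamma^{-}(01)=\Gamma^{-}(03)$; symmetrically, the successors of both $12$ and $32$ are the strings $(2,y)$ with $y\neq 2$, so $\Gamma^{+}(12)=\Gamma^{+}(32)$. Theorem~\ref{th:exist} then yields that $G_z$ is a $(1,z,2)$-mixed radial Moore graph with $N_1(G_z)=2$, the two non-central vertices being $01$ and $03$, each having exactly one vertex at distance $3$.

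For the optimality claim I would show that $N_1\geq 2$ for every $G^{*}\in{\cal RM}(1,z,2)$. Since such a graph has diameter exactly $3$, at least one vertex $u$ must be non-central (out-eccentricity $3$); moreover, the status of any non-central vertex exceeds the Moore status by at least one, so each non-central vertex contributes at least $1$ to $N_1$. It therefore suffices to exclude the possibility of a unique non-central vertex. With $r=1$, the vertex $u$ has a single edge-neighbor $w$. If $w$ is itself non-central, then $u$ and $w$ already form two non-central vertices; otherwise $w$ is central and adjacent to the non-central $u$ through an edge, so Lemma~\ref{lem:lema1} yields a second non-central vertex in $\Gamma^{+}(w)\setminus\{u\}$. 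In either case $G^{*}$ contains at least two non-central vertices and hence $N_1(G^{*})\geq 2$.

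It remains to check that $G_z$ lies in the minimum equivalence class of ${\cal RM}(1,z,2)/\sim$. If $G^{*}$ attains $N_1(G^{*})=2$, the preceding count forces exactly two non-central vertices, each contributing exactly $1$ to $N_1$, hence each with exactly one vertex at distance $3$. The status vector of $G^{*}$ therefore coincides with that of $G_z$, so $G^{*}\sim G_z$ and $N_p(G^{*})=N_p(G_z)$ for every $p$. Under the total order on ${\cal RM}(1,z,2)/\sim$ introduced in Section~\ref{sec:mrm}, the equivalence class of $G_z$ is thus the minimum one, so $G_z$ is a closest mixed graph to a mixed Moore graph, as claimed.

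The main obstacle is the step excluding a single non-central vertex, which relies on $r=1$ in an essential way: for larger undirected degree a non-central vertex would possess several edge-neighbors, and Lemma~\ref{lem:lema1} would no longer immediately deliver a non-central vertex distinct from $u$. The remaining steps reduce to bookkeeping with the Kautz adjacency rule and an elementary comparison of status sums.
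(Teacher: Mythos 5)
Your proposal is correct and follows essentially the same route as the paper: verify the neighborhood conditions $\Gamma^-(01)=\Gamma^-(03)$ and $\Gamma^+(12)=\Gamma^+(32)$, invoke Theorem~\ref{th:exist} to get $N_1(G_z)=2$, and use Lemma~\ref{lem:lema1} to show every graph in ${\cal RM}(1,z,2)$ has at least two non-central vertices, hence $N_1\geq 2$. You are in fact somewhat more careful than the paper in checking the arc-swap preconditions, in bridging from the lemma to the existence of two non-central vertices, and in handling the equivalence-class comparison for the total order, and your argument deducing two non-central vertices actually works for any $r\geq 1$ (pick any edge-neighbor of a non-central vertex; if it is central, apply the lemma), so the caveat in your final paragraph is unnecessary.
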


\begin{proof}
Clearly $\Gamma^-(01)=\Gamma^-(03)=\{x0 \, | \, x \in \Sigma,x\neq0\}$ and $\Gamma^+(12)=\Gamma^+(32)=\{2y \, | \, y \in \Sigma,y\neq 2\}.$ By theorem \ref{th:exist} we have that $G_z$ is a $(1,z,2)$-mixed radial Moore graph with $N_1(G_z)=2$. According to lemma \ref{lem:lema1}, every mixed radial Moore graph $G$ has at least $2$ non-central vertices, so $N_1(G) \geq 2$, and the result follows.
\end{proof}

\begin{figure}[ht]
        \centerline{\includegraphics[width=\textwidth]{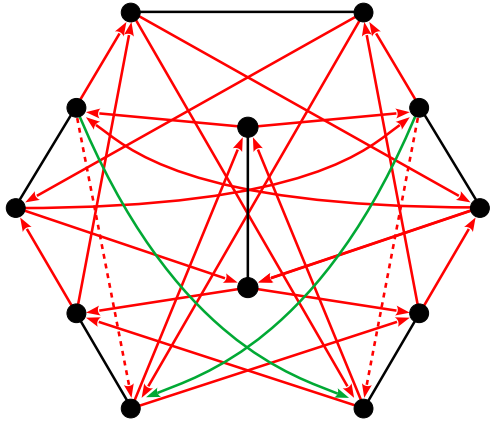}}
        \caption{The digraph $G_z$ for $z=2$, obtained from the Kautz digraph $Ka(3,2)$ (the dashed arcs are replaced by the green ones in the optimal swap).}
        \label{ka32}
\end{figure}

For the undirected case, every infinite family of radial Moore graphs of diameter $2$ constructed until know has a norm status that polinomialy increases with the degree. The family of mixed radial Moore graphs $G_z$ has a constant norm status equal to $2$, independently of $z$. This family of mixed graphs is cospectral with the Kautz digraphs and preserve some symmetries, as the one depicted in Fig. \ref{ka32}.

\section{Final remarks and open problems}\label{sec:remark}

Mixed radial Moore graphs are extremal graphs that are interesting by its own definition, but they are also a nice approximation to the well known Moore graphs. In this paper we provide two infinite families $H_r$ and $G_z$ of mixed radial Moore graphs of diameter $2$, but it would be very interesting to know if they exist for any combination of the parameters $(r,z,k)$, even for diameter greater than two.

\begin{question}
Are there $(r,z,k)$-mixed radial Moore graphs for any $r,z \geq 1$ and $k \geq 2$?
\end{question}

The arc swap method used in subsection \ref{s:r1zk} provides an optimal family $G_z$ of $(1,z,2)$-mixed radial Moore graphs, but in could be useful for other set of parameters $(r,z,k)$ where mixed Moore graphs exist. For instance, we have applied this method to the unique mixed Moore graph on $18$ vertices, the one for the case $(3,1,2)$ (described in \cite{B79}). This mixed Moore graph $B_{18}$ has an status vector of $\mathbf{s}(B_{18}):30^{18}$. It turns out that an optimal arc swap produces a mixed graph $B'_{18}$ with status vector $\mathbf{s}(B'_{18}):30^{10},31^6,32^2$, that is, $N_1(B'_{18})=10$. It is known that $B_{18}$ contains many copies of the Kautz digraph on $6$ vertices (see \cite{Lopez2015522}). It turns out that $B'_{18}$ is constructed from $B_{18}$ by the arc swap depicted in Fig. \ref{fig:kautz1} to any of this inside copies, that is, the optimal arc swap to the family of kautz digraphs described in subsection \ref{s:r1zk} produces the closest mixed radial Moore graph to $B_{18}$ so far (among all possible arc swaps that in can be done). Nevertheless, we do not know if $B'_{18}$ is the top ranked graph in ${\cal RM}(3,1,2)$, since we only have checked those graphs obtained by a single arc swap to $B_{18}$. This method has been applied also to the two mixed Moore graphs of order $108$ known until know (see \cite{J12}) with status vector $\mathbf{s}_{3,7,2}:204^{108}$, obtaining a mixed radial Moore graph with status vector $204^{98},205^{8},209^{2}$, that is, norm status equal to $18$.

\begin{problem}
Find those mixed radial Moore graphs best ranked in ${\cal RM}(r,z,k)$ for specific values $r,z \geq 1$ and $k \geq 2$, in those cases where mixed radial Moore graphs exist.
\end{problem}

We have answered to this question for the cases $(2,1,2)$ and $(1,z,2)$ in this paper, but there are infinitely many open cases. So, table \ref{Results} is presented below with the best results found so far.

\begin{table}[htbp]
    \centering
    \begin{tabular}{|c|cccccccc|}\hline
    $z$\textbackslash$r$ & 1 & 2 & 3 & 4 & 5 & 6 & 7 & $\dots$\\ \hline 
    1 & \textcolor{red}{2} & \textcolor{red}{8} & \textcolor{blue}{10} & \textcolor{green}{158} & \textcolor{green}{413} & \textcolor{green}{910} & \textcolor{green}{1769}  & \textcolor{green}{$\dots$}\\
    2 & \textcolor{red}{2} & ? & ? & ? & ? & ? & ? & $\dots$\\
    3 & \textcolor{red}{2} & ? & ? & ? & ? & ? & \textcolor{blue}{18} & $\dots$\\
    4 & \textcolor{red}{2} & ? & ? & ? & ? & ? & ? & $\dots$\\
    $\vdots$ & \textcolor{red}{$\vdots$} & $\vdots$ & $\vdots$ & $\vdots$ &$\vdots$ & $\vdots$ & $\vdots$ & \\ \hline 
    \end{tabular}
    \caption{Values of $N_1(G)$ of the closest mixed radial Moore graphs $G$ found so far. Red numbers correspond to optimal values. Blue numbers show the status norm of the closest mixed radial Moore graphs obtained by arc swapping to a Moore graph. Green values are the status norm values of the $H_r$ family.}
    \label{Results}
\end{table}

\subsection*{Acknowledgements}

The authors would like to thank “Ministerio de Ciencia e Innovación” of Spain, MCIN/AEI/10.13039/501100011033 (grant references PID2019-111536RB-I00 and PID2020-115442RB-I00) and AGAUR (grants references 2017SGR1158 and 2017SGR1276). Research of J. M. Ceresuela was supported by Secretaria d'Universitats i Recerca del Departament d'Empresa i Coneixement de la Generalitat de Catalunya (grant 2020 FISDU 00596). D. Chemisana thanks “Institució Catalana de Recerca i Estudis Avan\c cats (ICREA)” for the ICREA Acadèmia award.
\bibliographystyle{plain}
\bibliography{biblio}

\end{document}